\numberwithin{equation}{section}
\definecolor{DPurple}{rgb}{0.46,0.2,0.69}
\theoremstyle{definition}
\newtheorem{definition}{Definition}[section]
\theoremstyle{remark}
\newtheorem{remark}[definition]{Remark}
\theoremstyle{plain}
\newtheorem{theorem}[definition]{Theorem}
\newtheorem{result}[definition]{Result}
\newtheorem{lemma}[definition]{Lemma}
\newtheorem{proposition}[definition]{Proposition}
\newcommand{\eps}{\varepsilon}
\newcommand{\zt}{\zeta}
\newcommand{\poin}{\boldsymbol{{\sf p}}}
\newcommand{\bdy}{\partial}
\newcommand{\OM}{\Omega}
\newcommand{\D}{\mathbb{D}}
\newcommand{\smoo}{\mathcal{C}}
\newcommand{\hol}{\mathcal{O}}
\newcommand{\bcdot}{\boldsymbol{\cdot}}
\newcommand{\lrarw}{\longrightarrow}
\newcommand{\Top}{\text{{\sl Top}}}
\newcommand{\N}{\mathbb{N}}
\newcommand{\Cn}{\mathbb{C}^n}
\newcommand{\C}{\mathbb{C}}
\newcommand{\wt}{\widetilde}
\begin{document}

\title[Kobayashi complete domains]{Kobayashi complete domains in complex manifolds}

\author{Rumpa Masanta}
\address{Department of Mathematics, Indian Institute of Science, Bangalore 560012, India}
\email{rumpamasanta@iisc.ac.in}

\begin{abstract}
In this paper, we give sufficient conditions for Cauchy-completeness of Kobayashi hyperbolic domains in complex
manifolds. The first result gives a sufficient condition for completeness for relatively compact domains in several large
classes of manifolds. This follows from our second result, which may
be of independent interest, in a much more general setting. This extends a result of Gaussier to the setting
of manifolds. 
\end{abstract}

\keywords{Complex manifolds, Kobayashi completeness, weakly hyperbolic manifolds.}
\subjclass[2010]{Primary: 32Q45; Secondary: 53C23, 32Q28}

\maketitle

\vspace{-4.25mm}
\section{Introduction and statement of results}\label{S:intro}

This paper is devoted to investigating when, given a complex manifold $X$ and a domain $\OM\varsubsetneq X$ ($\OM$ not
necessarily relatively compact) that is Kobayashi hyperbolic, $\OM$ equipped with the Kobayashi distance $K_\OM$ is
Cauchy-complete. In this paper, our focus will
be on complex manifolds $X$ with $\dim (X)=n$ that cannot be realized as
open subsets of $\C^n$. The question of determining whether $(\OM, K_\OM)$ is Cauchy-complete when $\OM\varsubsetneq 
\Cn$ is already a very hard problem. Still, several results are known in the latter case; see, for instance 
\cite[Theorem~3.5]{BedfordFornaess:acopfowpd78}, \cite[Theorem~4.1.7]{kobayashi:hcs98}, \cite[Theorem~1]{Gaussier:tachodic99}.
\smallskip

Deciding whether $(\OM,K_\OM)$ is Cauchy-complete is an important problem. For instance, Cauchy-completeness of
$(\OM, K_\OM)$ would imply that $(\OM,K_\OM)$ is a geodesic space (this follows from the properties of $K_\OM$ and the
Hopf--Rinow Theorem). This would endow $(\OM,K_\OM)$ with many of the properties of geodesically complete Riemannian
manifolds, which are very well-studied. Furthermore, Cauchy-completeness of $(\OM,K_\OM)$ is indispensable for certain
types of results for analytical reasons: see, for instance \cite{Kaup:hkR68, Kwack:gbPt69, KimVerdiani:cndnrnn2dag04}.
\smallskip

Before discussing the set-up that is the focus of this paper, we present a definition that will
be essential to our discussion.

\begin{definition}\label{D:peak_function}
Let $X$ be a complex manifold and let $\OM\varsubsetneq X$ be a domain in $X$. Let $p\in\bdy\OM$. We say that 
\emph{$p$ admits a local holomorphic weak peak function} if there exists a neighbourhood $V_p$ of
$p$ in $X$ and a continuous function $f_p: (V_p\cap\OM)\cup \{p\}\lrarw \C$ such that $f_p$ is holomorphic on 
$V_p\cap\OM$ and such that 
\[
|f_p(z)|<1 \;\;\forall z\in V_p\cap \OM, \quad f_p(p)=1.
\]
We say that \emph{$p$ admits a holomorphic weak peak function} if $(V_p\cap\OM)$ is replaced by $\OM$
in the latter definition.
\end{definition}

Considering any connected complex manifold  $X$, the result that addresses Cauchy-completeness of
$(X, K_X)$ in this generality says that if $X$ admits a complete Hermitian metric whose holomorphic sectional
curvature is bounded above by a negative constant, then $(X, K_X)$ is Cauchy-complete
\cite[Theorem~4.11]{kobayashi:hmhm70}.
Since it is very unclear when a connected complex manifold admits a complete Hermitian metric just with  
everywhere negative holomorphic sectional curvature, another approach to our
goal might be to identify some class of complex
manifolds $X$ for which, given a domain $\OM\varsubsetneq X$, conditions similar to those in the papers
cited above give Cauchy-completeness of $(\OM, K_{\OM})$. A result of this kind by Bharali \emph{et al.}
\cite[Result~3.6]{BharaliEtAl:phmspRs18} states (paraphrasing slightly): 
\emph{Let $X$ be a Stein manifold and $\OM\Subset X$ be a relatively compact domain. If each point
of $\bdy\OM$ admits a holomorphic weak peak function, then $\OM$ equipped with the Carath{\'e}odory
distance $C_{\OM}$ is a proper metric space and hence is Cauchy-complete.} Note that, since
$C_{\OM}\leq K_{\OM}$, we deduce in this case that $(\OM, K_{\OM})$ is Cauchy-complete. The authors
attribute \cite[Result~3.6]{BharaliEtAl:phmspRs18} to Kobayashi, stating that the proof of
\cite[Theorem~4.1.7]{kobayashi:hcs98} applies to it also. This claim isn't quite accurate: see Zwonek
\cite[Remark~3]{Zwonek:ftpspcm20}, who also observes that the above statement \textbf{would} be true
if $\OM$ were \emph{a priori} Carath{\'e}odory hyperbolic. The claim by Bharali \emph{et al.} still
proves to be useful to the main aim of this paper. This paper responds to their claim in two ways: 
\begin{itemize}[leftmargin=20pt]
  \item We prove Cauchy-completeness of $(\OM, K_{\OM})$ under slightly weaker conditions than
  those in \cite[Result~3.6]{BharaliEtAl:phmspRs18} (the latter result itself can be proved
  but we shall not discuss this).
  \item With $X$ and $\OM$ as in Definition~\ref{D:peak_function}, properness of
  $(\OM, C_{\OM})$ is a more restrictive property than Cauchy-completeness of $(\OM, K_{\OM})$. It would
  thus be useful to bypass all considerations of $C_{\OM}$ and focus on the properties of
  $K_{\OM}$. With this approach, we are able to prove Cauchy-completeness of $(\OM, K_{\OM})$ for a
  class of ambient manifolds $X\varsupsetneq \OM$ much wider than the class of Stein manifolds. 
\end{itemize}
These points are made more precise by the following theorem (the words ``slightly weaker conditions'' above refer
to the fact that we only need \textbf{local} holomorphic weak peak functions in the hypothesis of
our theorem):

\begin{theorem}\label{T:weak_hyperbolic}
Let $X$ be any one of the following:
\begin{itemize}
  \item[$(a)$] A Kobayashi hyperbolic complex manifold,
  \item[$(b)$] A Stein manifold,
  \item[$(c)$] A holomorphic fiber bundle $\pi: X\to Y$ such that
  $Y$ is either of $(a)$ or $(b)$ and the fibers are Kobayashi hyperbolic,
  \item[$(d)$] A holomorphic covering space $\pi: X\to Y$ such that $Y$ is either of $(a)$ or $(b)$.
\end{itemize}
Let $\OM\varsubsetneq X$ be a relatively compact domain. If each point of $\bdy\OM$ admits a local holomorphic weak peak function, then $(\OM, K_\OM)$ is Cauchy-complete.
\end{theorem}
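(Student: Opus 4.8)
The plan is to reduce Cauchy-completeness to a statement about boundary distances, and then to extract from each local holomorphic weak peak function a blow-up of $K_\OM$ near the corresponding boundary point. Since $\OM\Subset X$, every sequence in $\OM$ has a subsequence converging in $\overline{\OM}\subset X$. Granting that $\OM$ is Kobayashi hyperbolic -- so that $K_\OM$ is a genuine distance inducing the manifold topology on $\OM$ -- a $K_\OM$-Cauchy sequence $\{z_j\}$ subconverges to some $z_\infty\in\overline{\OM}$; if $z_\infty\in\OM$, then the Cauchy property forces $K_\OM$-convergence of the whole sequence to $z_\infty$. Hence everything comes down to excluding $z_\infty\in\bdy\OM$, and it suffices to prove the following core assertion: \emph{for every $p\in\bdy\OM$ and every $z_0\in\OM$, one has $K_\OM(z_0,z)\to\infty$ as $z\to p$ within $\OM$.} I would isolate this and organize the proof around it.

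Fix $p\in\bdy\OM$ and let $f_p\colon (V_p\cap\OM)\cup\{p\}\to\C$ be the local holomorphic weak peak function of Definition~\ref{D:peak_function}. On the domain $V_p\cap\OM$, $f_p$ is a genuine holomorphic map into $\D$, so by the distance-decreasing property and the fact that $K_\D$ is the Poincar\'e distance, for a fixed $w_0\in V_p\cap\OM$,
\[
K_{V_p\cap\OM}(w_0,z)\ \ge\ C_{V_p\cap\OM}(w_0,z)\ \ge\ K_\D\big(f_p(w_0),f_p(z)\big)\ \xrightarrow[\;z\to p\;]{}\ \infty,
\]
because $f_p(z)\to f_p(p)=1\in\bdy\D$ while $f_p(w_0)$ stays fixed in $\D$. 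This is precisely the Carath\'eodory-type blow-up underlying \cite{BharaliEtAl:phmspRs18}, but it is now available for free on the small piece $V_p\cap\OM$, where the \textbf{local} peak function is automatically \textbf{global}. The remaining task is to transfer this blow-up from $V_p\cap\OM$ to $\OM$ itself.

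The transfer is the crux, and I expect it to be the main obstacle. What is needed is a localization estimate for the Kobayashi distance: neighbourhoods $U\Subset V_p$ of $p$ and constants $c>0$, $C\ge 0$ with $K_\OM(w,z)\ge c\,K_{V_p\cap\OM}(w,z)-C$ for $w,z\in U\cap\OM$. Granting this, fixing $w_0\in U\cap\OM$ and using the triangle inequality gives, for $z\in U\cap\OM$,
\[
K_\OM(z_0,z)\ \ge\ K_\OM(w_0,z)-K_\OM(z_0,w_0)\ \ge\ c\,K_{V_p\cap\OM}(w_0,z)-C-K_\OM(z_0,w_0)\ \lrarw\ \infty,
\]
which is exactly the core assertion. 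I would prove the localization in infinitesimal form, $k_\OM(z;\xi)\ge c\,k_{V_p\cap\OM}(z;\xi)$ for $z\in U\cap\OM$, and then integrate. The infinitesimal estimate is the classical peak-function bump argument: an analytic disk nearly realizing $k_\OM(z;\xi)$ either stays inside $V_p$, in which case it competes for $k_{V_p\cap\OM}(z;\xi)$, or it must exit $V_p$, and the weak peak function $f_p$, together with a fixed background Hermitian metric on the compact set $\overline{\OM}$, penalizes such an excursion by a definite amount. The delicate point, sharper here than in the bounded case, is that $f_p$ is only a \emph{weak} peak function defined on $V_p\cap\OM$: one must secure a gap $|f_p|\le 1-\eps$ along the portion of the exit locus lying in $\OM$, and control disks that approach $\bdy V_p$ near $\bdy\OM$. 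This is the step that needs the care analogous to \cite{Gaussier:tachodic99}, now carried out in the manifold setting without global coordinates.

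Finally I would verify, case by case, the standing assumption that $\OM$ is Kobayashi hyperbolic and that the localization above is legitimate; this is where the four classes $(a)$--$(d)$ enter. In case $(a)$, hyperbolicity of $\OM\subset X$ is immediate, since subdomains of hyperbolic manifolds are hyperbolic. In case $(b)$, a proper holomorphic embedding of the Stein manifold $X$ into some $\C^N$ carries the relatively compact $\OM$ to a bounded domain, which is hyperbolic, and the distance-decreasing property transfers hyperbolicity back to $\OM$; this also reduces the localization to the Euclidean situation. In cases $(c)$ and $(d)$ I would use the functorial behaviour of the Kobayashi (pseudo)distance: a covering map is an infinitesimal Kobayashi isometry, so $(d)$ reduces to the base $Y$ and hence to $(a)$ or $(b)$; while for a holomorphic fiber bundle with hyperbolic fibers over a base of type $(a)$ or $(b)$, relative compactness of $\OM$ confines $\pi(\OM)$ to finitely many trivializing charts over a hyperbolic (or boundedly embeddable) part of $Y$, yielding a local product hyperbolic structure and so hyperbolicity of $\OM$. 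In every case the core assertion then excludes $z_\infty\in\bdy\OM$, completing the proof.
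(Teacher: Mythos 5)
Your reduction (exclude boundary cluster points of a $K_\OM$-Cauchy sequence by forcing a blow-up near each $p\in\bdy\OM$) is sound, and you correctly identify the crux: transferring the blow-up of $K_{V_p\cap\OM}$, which the peak function gives for free, to $K_\OM$ via a localization estimate. But that crux is exactly where your proposal has a genuine gap. The ``classical peak-function bump argument'' you invoke for the infinitesimal estimate $k_\OM(z;\xi)\ge c\,k_{V_p\cap\OM}(z;\xi)$ does not go through with only a \emph{local weak} peak function: $f_p$ is undefined outside $V_p\cap\OM$, so it cannot penalize an analytic disk after it exits $V_p$; and since $f_p$ is merely a weak peak function, $|f_p|$ need not be bounded away from $1$ on the exit locus $\OM\cap\bdy U$ where that locus approaches $\bdy\OM$, so the ``definite gap $|f_p|\le 1-\eps$'' you say one must secure is precisely what is unavailable. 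In $\Cn$, Gaussier closes this hole with local plurisubharmonic \emph{anti-peak} functions, and the paper explicitly points out that the existence of such functions is the obstruction to running his argument on a manifold. You flag the difficulty but do not supply the missing idea, so the proof is incomplete at its central step.

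The paper's route is different and is worth contrasting with yours. It never proves a metric localization from the peak function at all. Instead, it first shows that each of the classes $(a)$--$(d)$ consists of \emph{weakly hyperbolic} manifolds (citing Duc and Alehyane--Amal), deduces that a relatively compact $\OM$ in such an $X$ is \emph{hyperbolically imbedded}, i.e.\ $\delta:=K_\OM(W\cap\OM,\OM\setminus V)>0$ for suitable shrinking neighbourhoods $W\Subset V$ of any boundary point, and then feeds this positivity into Kobayashi's chain-of-discs lemma (Lemma~\ref{L:hyp}): any chain of analytic discs joining two points of $W\cap\OM$ with total Poincar\'e length $<\delta/2$ cannot leave $V\cap\OM$, giving $K_{V\cap\OM}(z,z')<C\eps$ whenever $K_\OM(z,z')<\eps<\delta/2$. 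That small-distance comparison, applied to the tail of the Cauchy sequence, contradicts $f_p(z_n)\to1$ via Lemma~\ref{L:hol}. In your proposal the hypotheses $(a)$--$(d)$ are used only to check that $\OM$ is Kobayashi hyperbolic; in the paper they carry the much heavier load of producing the hyperbolic imbedding, which is the substitute for the anti-peak functions you would otherwise need. To repair your argument you would either have to construct those anti-peak objects on manifolds (not known in this generality) or establish hyperbolic imbeddedness and use it as the paper does.
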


A crucial step in the proof of the above result is the second theorem of this paper. To introduce
this theorem, we need two more definitions.

\begin{definition}\label{D:peak_function_infty}
Let $X$ and $\OM$ be as in Definition~\ref{D:peak_function}, and suppose $X$ is non-compact and $\OM$ is not
relatively compact. Let $X^\infty$ denote the one-point compactification of $X$ and $\bdy_\infty\OM$ the
boundary of $\OM$ viewed as a domain in $X^\infty$. We say that \emph{$\infty$ admits a local holomorphic weak
peak function} if there exists a compact $K\subset\OM$ and a function $f_\infty$ holomorphic on
$\OM\setminus K$ such that 
\[ 
|f_\infty (z)| <1\;\; \forall z\in\OM\setminus K,\quad \lim_{z\to\infty} f_\infty(z)=1.
\]
\end{definition}

\begin{definition}\label{D:hyp_imb}
Let $X$ be a connected complex manifold and let $Z$ be a complex submanifold in $X$. We say that \emph{$Z$ is 
hyperbolically imbedded in $X$} if for each $x\in\overline{Z}$ and for each neighbourhood $V_x$ of $x$ in $X$,
there exists a neighbourhood $W_x$ of $x$ in $X$, $\overline{W_x}\subset V_x$, such that
\[K_Z(\overline{W_x}\cap Z, Z\setminus V_x)>0.\] 
\end{definition}

\begin{remark}\label{Rem:def-hyp_imb}
Clearly, $Z$ is hyperbolically imbedded in $X$ if and only if, for $x,y\in\overline{Z}$ and $x\ne y$, there exist neighbourhoods $V_x$ of
$x$ and $V_y$ of $y$ in $X$ such that $K_Z(V_x\cap Z, V_y\cap Z)>0$.
\end{remark}

\begin{theorem}\label{T:k-complete}
Let $X$ be a connected complex manifold and let $\OM\varsubsetneq X$ be a hyperbolically imbedded 
domain. 
\begin{itemize}
\item[$(a)$] If $\OM$ is relatively compact, assume: \medskip

\begin{itemize}[leftmargin=33pt]
\item[$(*)$] each point in $\bdy\OM$ admits a local holomorphic weak peak function.
\end{itemize}
\medskip

\item[$(b)$] If $\OM$ is not relatively compact (in which case $X$ is non-compact), let $X^\infty$
denote the one-point compactification of $X$, assume the condition $(*)$, and assume that $\infty$ 
admits a local holomorphic weak peak function.
\end{itemize}
Then, $(\OM, K_\OM)$ is Cauchy-complete.
\end{theorem}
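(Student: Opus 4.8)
The plan is to show that every $K_\OM$-Cauchy sequence converges to a point of $\OM$, which is enough: since $\OM$ is hyperbolically imbedded in $X$, the standard consequence (compare Remark~\ref{Rem:def-hyp_imb}) is that $\OM$ is Kobayashi hyperbolic, that $K_\OM$ is a genuine distance, and that the topology induced by $K_\OM$ coincides with the manifold topology on $\OM$. So let $(z_j)$ be $K_\OM$-Cauchy, hence $K_\OM$-bounded. In case $(a)$ the closure $\overline{\OM}$ is compact in $X$, while in case $(b)$ the closure of $\OM$ in the one-point compactification $X^\infty$ is compact; either way, after passing to a subsequence I may assume $z_j\to p$, where $p\in\overline{\OM}$ in case $(a)$ and $p\in\overline{\OM}\cup\{\infty\}$ (in $X^\infty$) in case $(b)$. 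The entire problem then reduces to excluding $p\in\bdy\OM$ and, in case $(b)$, $p=\infty$: once $p\in\OM$, the agreement of the two topologies forces $z_j\to p$ in $(\OM,K_\OM)$.

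The core is a localization estimate for $K_\OM$ near $p$, and this is exactly where hyperbolic imbeddedness is used. Suppose $p\in\bdy\OM$ and let $f_p$ be a local holomorphic weak peak function on $V_p\cap\OM$. Pick a smaller neighbourhood $W$ of $p$ with $\overline{W}\subset V_p$; by hyperbolic imbeddedness (Definition~\ref{D:hyp_imb}, shrinking $W$ and using compactness of $\overline{\OM}\setminus V_p$ in case $(a)$ to obtain uniformity) there is $\delta>0$ with $K_\OM(\overline{W}\cap\OM,\,\OM\setminus V_p)\ge 2\delta$. I would first establish the key sub-claim that analytic disks cannot escape $V_p$ too quickly: if $\varphi\colon\D\to\OM$ is holomorphic with $\varphi(0)=z\in W\cap\OM$, then for $|\zeta|<r:=\tanh(2\delta)$ one has $K_\OM(z,\varphi(\zeta))\le K_\D(0,\zeta)<2\delta$, which forces $\varphi(\zeta)\in V_p\cap\OM$. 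Rescaling $\zeta\mapsto\varphi(r\zeta)$ produces a competing disk into $V_p\cap\OM$ with derivative $r\varphi'(0)$, so at the level of the infinitesimal Kobayashi--Royden metric $\kappa$ this yields $\kappa_\OM(z,v)\ge r\,\kappa_{V_p\cap\OM}(z,v)$ for every $z\in W\cap\OM$ and every tangent vector $v$. Integrating this pointwise inequality along near-geodesics, which by the same $2\delta$-separation remain in $W\cap\OM$ once $z,w$ are close to $p$ and $K_\OM(z,w)$ is below a threshold $\eta$, and using the standard fact that $K_\OM$ is the integral of $\kappa_\OM$, I obtain the localization $K_\OM(z,w)\ge r\,K_{V_p\cap\OM}(z,w)$ for $z,w$ sufficiently near $p$.

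With localization in hand, the peak function supplies the blow-up. As $f_p$ maps $V_p\cap\OM$ holomorphically into $\D$, it is distance-decreasing, so $K_{V_p\cap\OM}(z,w)\ge K_\D(f_p(z),f_p(w))$. Fix $N$ so large that $z_N\in W\cap\OM$ and $K_\OM(z_N,z_j)<\eta$ for all $j\ge N$ (possible since $(z_j)$ is Cauchy with $z_j\to p$). Then for $j\ge N$,
\[
\eta\;>\;K_\OM(z_N,z_j)\;\ge\;r\,K_{V_p\cap\OM}(z_N,z_j)\;\ge\;r\,K_\D\bigl(f_p(z_N),f_p(z_j)\bigr),
\]
whose right-hand side tends to $+\infty$ as $j\to\infty$, because $f_p(z_j)\to f_p(p)=1\in\bdy\D$ while $f_p(z_N)\in\D$ is fixed. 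This contradiction shows $p\notin\bdy\OM$.

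It remains to treat $p=\infty$ in case $(b)$, which I would handle by the same two-step scheme with $f_\infty$ in place of $f_p$: neighbourhoods of $\infty$ are complements of compact sets, $\OM\setminus K$ plays the role of $V_p\cap\OM$, and the blow-up again follows from $f_\infty(z)\to 1$. I expect the main obstacle to lie precisely in the localization, i.e.\ in upgrading a \emph{local} peak function to a \emph{global} lower bound on $K_\OM$: the naive estimate from $f_p$ controls $K_{V_p\cap\OM}\ge K_\OM$, which runs in the wrong direction, and it is hyperbolic imbeddedness, through the non-escaping of disks, that reverses it. The point $p=\infty$ is the most delicate, since the a priori separation $\liminf_{z\to\infty}K_\OM(z,K)>0$ needed to start the non-escaping argument there is not immediate from hyperbolic imbeddedness in $X$ alone; I would extract it from a normal-families argument combined with the peak behaviour of $f_\infty$, and this same separation is what also underpins the localization at finite boundary points when $\OM$ fails to be relatively compact.
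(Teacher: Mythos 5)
Your overall architecture coincides with the paper's: reduce to a subsequential limit $p\in\bdy\OM$ or $p=\infty$, localize $K_\OM$ near $p$ using hyperbolic imbeddedness, and then derive a contradiction from the distance-decreasing property applied to the peak function (your final blow-up computation is exactly the content of the paper's Lemma~\ref{L:hol}). The genuine difference is in how the localization is obtained. The paper uses Lemma~\ref{L:hyp} (Kobayashi's Lemma~3.3.5), which works directly with chains of analytic discs defining $K_\OM$: the separation $\delta=K_\OM(W\cap\OM,\OM\setminus V)>0$ forces every disc in a short chain to map $B_{\poin}(0,\delta/2)$ into $V\cap\OM$, and rescaling the chain gives $K_{V\cap\OM}(z,z')<C\eps$ whenever $K_\OM(z,z')<\eps$. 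You instead prove the same non-escaping statement at the infinitesimal level, obtain $\kappa_\OM\ge r\,\kappa_{V_p\cap\OM}$ on $W\cap\OM$, and integrate along near-geodesics. This is workable, but it costs you two extra inputs the paper avoids: Royden's theorem that $K_\OM$ is the integrated form of $\kappa_\OM$ on a complex manifold, and a second application of hyperbolic imbeddedness (your ``shrinking $W$'') to guarantee that near-geodesics between points close to $p$ stay inside $W\cap\OM$, where the pointwise inequality is valid. The chain-of-discs route gives the same estimate with no appeal to the infinitesimal metric, which is why the paper quotes it as a black box.

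The one concrete gap is your treatment of $p=\infty$. You correctly observe that hyperbolic imbeddedness, which concerns $\overline{\OM}\subseteq X$, says nothing about separation near $\infty$, but the ``normal-families argument combined with the peak behaviour of $f_\infty$'' you invoke to produce $\liminf_{z\to\infty}K_\OM(z,K)>0$ is not supplied, and it is not clear how it would run. The paper's resolution is elementary and bypasses both hyperbolic imbeddedness and $f_\infty$ at this step: choose $U$ open with $K\varsubsetneq U\subseteq\overline{U}\varsubsetneq\OM$ and $\overline{U}$ compact, and set $V:=X\setminus K$, $W:=X\setminus\overline{U}$. Then $\OM\setminus V=K$ is a compact subset of $\OM$ \emph{itself}, disjoint from the set $\overline{W\cap\OM}\cap\OM$, which is closed in $\Top(\OM)=\Top(K_\OM)$; in a metric space the distance between a compact set and a disjoint closed set is positive, so $K_\OM(W\cap\OM,\OM\setminus V)>0$ for free, and Lemma~\ref{L:hyp} applies with these $V$, $W$ exactly as at a finite boundary point. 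You should replace your unproved separation claim with this observation; with that substitution your argument closes.
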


Note the resemblance of Theorem~\ref{T:k-complete} to \cite[Theorem~1]{Gaussier:tachodic99}
by Gaussier. In fact, Theorem~\ref{T:k-complete} is inspired by Gaussier's result. That said, there are two differences
between the \textbf{hypotheses} of our result and \cite[Theorem~1]{Gaussier:tachodic99} which allows us to
extend Gaussier's result to the setting of manifolds and to give a purely metric-geometry proof. We will elaborate
upon this in Section~\ref{S:k-complete}.

\subsection{A remark on our notation}
A couple of notations will recur throughout this paper that will have certain assumptions associated with them. Let
us clarify these assumptions.
\begin{enumerate}
    \item[$(a)$] Let $X$ be a complex manifold and $G$ be an open set in $X$. We define the function $K_G: G\times G\lrarw [0,\infty]$ by
    \begin{equation}
     K_G(z,z'):=
     \begin{cases}
      K_{\wt{G}}(z,z') &\text{if $z$, $z'$ are in same connected component $\wt{G}$ of $G$,}\notag\\
      \infty &\text{if $z$, $z'$ are in different connected components of $G$},
     \end{cases}
    \end{equation}
    for all $z$, $z'\in G$. Here, $K_{\wt{G}}$ is the Kobayashi pseudodistance on the domain $\wt{G}$ in $X$ (i.e., $K_{\wt{G}}:\wt{G}\times\wt{G}\lrarw [0,\infty)$).
    
    \item[$(b)$] Let $Z$ and $X$ be complex manifolds. Then $\hol(Z,X)$ will denote the space of holomorphic mappings from $Z$ into $X$ equipped with the compact-open topology.
    \end{enumerate}
\smallskip

\section{Preliminary lemmas}\label{S:prelim}

This section is devoted to a number of technical lemmas that will be needed in proving the main theorems.

\begin{lemma}\label{L:hyp}
Let $X$ be a connected complex manifold and $\OM$ be a Kobayashi hyperbolic domain in $X$. Let $V$, $W$ be open 
sets in $X$ such that $W \subseteq V$, $W\cap\OM\neq\emptyset$, and such that
$\delta:=K_\OM(W\cap\OM, \OM\setminus V) >0$. Fix $\eps\in (0,\delta /2)$. Then, 
there exists a constant $C>0$ such that for all 
$z,z'\in W\cap\OM$ satisfying $K_\OM(z, z') <\eps$, we have $K_{V\cap\OM}(z, z') <C\eps$.
\end{lemma}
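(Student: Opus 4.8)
The plan is to unravel the definition of the Kobayashi pseudodistance as an infimum over chains of analytic disks and then to use the hypothesis $\eps<\delta/2$ to confine the relevant portions of those disks to $V\cap\OM$. Recall that $K_\OM(z,z')$ is the infimum of $\sum_{i=1}^m K_{\D}(0,\zeta_i)$ over all chains $z=w_0,w_1,\dots,w_m=z'$ and all $f_i\in\hol(\D,\OM)$ with $f_i(0)=w_{i-1}$ and $f_i(\zeta_i)=w_i$, where $K_{\D}$ is the Poincar\'e distance. So, given $z,z'\in W\cap\OM$ with $K_\OM(z,z')<\eps$, I would first fix such a chain of total length $\sum_i K_{\D}(0,\zeta_i)<\eps$ and write $t_i:=K_{\D}(0,\zeta_i)$.

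The first step is to note that every vertex of the chain stays Kobayashi-close to $z$: since the initial segment $f_1,\dots,f_j$ connects $z$ to $w_j$, the triangle inequality for $K_\OM$ gives $K_\OM(z,w_j)\le\sum_{i\le j}t_i<\eps$ for every $j$. In particular each base point $w_{i-1}=f_i(0)$ satisfies $K_\OM(z,w_{i-1})<\eps$.

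The key (and main) step is the localization of each disk, and this is where $\eps<\delta/2$ is indispensable. Because $f_i$ is distance-decreasing, for $\zeta\in\D$ we have $K_\OM\big(z,f_i(\zeta)\big)\le K_\OM(z,w_{i-1})+K_{\D}(0,\zeta)<\eps+K_{\D}(0,\zeta)$. Set $r_0:=\tanh(\delta/2)$, so that $K_{\D}(0,\zeta)<\delta/2$ precisely when $|\zeta|<r_0$. For $|\zeta|<r_0$ the bound gives $K_\OM(z,f_i(\zeta))<\eps+\delta/2<\delta$; since $z\in W\cap\OM$ and $\delta=K_\OM(W\cap\OM,\OM\setminus V)$, the point $f_i(\zeta)$ cannot lie in $\OM\setminus V$, whence $f_i(\zeta)\in V\cap\OM$. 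Therefore the rescaled disk $g_i(\zeta):=f_i(r_0\zeta)$ maps $\D$ into $V\cap\OM$, and since $t_i<\eps<\delta/2$ forces $\zeta_i\in r_0\D$, the disk $g_i$ realizes the pair $(w_{i-1},w_i)$ inside $V\cap\OM$. Conceptually, $\eps<\delta/2$ furnishes a definite Poincar\'e ``buffer'' $r_0$ that is uniform along the entire chain and keeps all of the $g_i$ away from $\bdy V$.

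It remains to compare Poincar\'e parameters, which is routine. Since $|\zeta_i|=\tanh t_i$, the point of $\D$ that $g_i$ sends to $w_i$ is $\zeta_i/r_0$, and $K_{\D}(0,\zeta_i/r_0)=\tanh^{-1}\!\big(\tanh t_i/\tanh(\delta/2)\big)=:h(t_i)$. The function $h$ is smooth on $[0,\delta/2)$ with $h(0)=0$, so $h(t)/t$ is bounded on the compact interval $[0,\eps]\subset[0,\delta/2)$ by a constant $C$ depending only on $\eps$ and $\delta$; explicitly one may take $C=\big[(1-\beta^2)\tanh(\delta/2)\big]^{-1}$ with $\beta=\tanh\eps/\tanh(\delta/2)<1$. (Note that $h(t)\to\infty$ as $t\to\delta/2$, which is exactly why $\eps<\delta/2$ cannot be relaxed and why $C$ is allowed to depend on $\eps$.) Concatenating the $g_i$ yields a chain in $V\cap\OM$ from $z$ to $z'$ of total length $\sum_i h(t_i)\le C\sum_i t_i<C\eps$; in particular $z$ and $z'$ lie in the same connected component of $V\cap\OM$, so $K_{V\cap\OM}(z,z')\le\sum_i h(t_i)<C\eps$, which is the desired conclusion.
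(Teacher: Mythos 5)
Your proof is correct and is essentially the same localization-of-chains argument as in Kobayashi's Lemma~3.3.5, to which the paper defers for the proof of this lemma: restrict each disc of a near-optimal chain to the Poincar{\'e} ball of radius $\delta/2$, use $\eps<\delta/2$ to trap those restrictions in $V\cap\OM$, rescale by $\tanh(\delta/2)$, and compare Poincar{\'e} lengths. Your handling of the comparison constant\,---\,bounding $h(t)/t$ only on $[0,\eps]\subset[0,\delta/2)$ and letting $C$ depend on $\eps$\,---\,is the correct way to finish, since the ratio blows up as $t\to\delta/2$.
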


This lemma is, in essence, \cite[Lemma~3.3.5]{kobayashi:hcs98} by Kobayashi, just written
differently. Firstly, \cite[Lemma~3.3.5]{kobayashi:hcs98} is stated in a chapter where a global assumption
is made that $\OM$ is relatively compact. But it is easily checked that this condition is \textbf{not}
required in the proof of the latter lemma. Secondly, in \cite[Lemma~3.3.5]{kobayashi:hcs98},
the sets $V$ and $W$ appear as $X$-open neighbourhoods of some point in $\overline{\OM}$. For the ways
in which we will use Lemma~\ref{L:hyp}, it is more convenient to let $V$ and $W$ be as above. With
these words of explanation, we refer the reader to \cite{kobayashi:hcs98} for the proof of Lemma~\ref{L:hyp}.

\smallskip

We will need another technical lemma for the proof of Theorem~\ref{T:k-complete}.

\begin{lemma}\label{L:hol}
Let $X$ be a connected complex manifold and $G$ be an open set in $X$. Assume that there is a sequence 
$(z_n)_{n\geq 1}\subset G$ admitting the constants $R>0$ and $\nu\in\N$ such that $K_G(z_n,z_\nu) <R$ for all $n\geq\nu$. 
Then, there does not exist any holomorphic function $f\in \hol(G,\D)$ such that $f(z_n)\to 1$ as $n \to \infty$.
\end{lemma}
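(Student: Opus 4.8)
The plan is to argue by contradiction, and the single mechanism driving the argument is the distance-decreasing property of the Kobayashi (pseudo)distance under holomorphic maps. Suppose, towards a contradiction, that there exists $f\in\hol(G,\D)$ with $f(z_n)\to 1$. Since $K_G(z_n,z_\nu)<R<\infty$ for all $n\geq\nu$, each $z_n$ with $n\geq\nu$ lies in the same connected component $\wt{G}$ of $G$ as $z_\nu$ (recall that $K_G=\infty$ across distinct components); thus it suffices to work with $f|_{\wt G}\in\hol(\wt G,\D)$ and with $K_{\wt G}=K_G$ on this component, so that $K_G$ is a genuine distance on the relevant set and the usual functorial inequality applies.

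The key step is to push the hypothesis forward through $f$. Because $f$ is holomorphic into $\D$ and the Kobayashi distance of $\D$ coincides with the Poincar{\'e} distance $\poin$, the distance-decreasing property gives, for every $n\geq\nu$,
\[
\poin\big(f(z_n),f(z_\nu)\big)=K_\D\big(f(z_n),f(z_\nu)\big)\leq K_G(z_n,z_\nu)<R.
\]
Hence the entire tail $\{f(z_n):n\geq\nu\}$ is contained in a single Poincar{\'e} ball $B_\poin(f(z_\nu),R)$ of finite radius, centred at the fixed interior point $f(z_\nu)\in\D$.

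To conclude, I would invoke the elementary fact that the Poincar{\'e} distance blows up at the boundary: for fixed $a\in\D$ one has $\poin(a,w)\to\infty$ as $|w|\to 1$. This is immediate from the formula $\poin(a,w)=\tanh^{-1}\!\big|\tfrac{a-w}{1-\bar a w}\big|$, whose argument tends to $1$ as $|w|\to1$ (a M{\"o}bius automorphism of $\D$ carries $\bdy\D$ to $\bdy\D$). Since by assumption $f(z_n)\to 1\in\bdy\D$, it follows that $\poin(f(z_\nu),f(z_n))\to\infty$, contradicting the uniform bound $<R$ obtained above.

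I do not anticipate any genuine obstacle: the whole content is the interplay between the distance-decreasing property of $K$ and the completeness of $(\D,\poin)$, whose metric balls are relatively compact in $\D$ and therefore cannot accumulate at $\bdy\D$. The only points requiring a line of care are the reduction to a single connected component (so that the functorial inequality is meaningful) and the explicit verification that $\poin(f(z_\nu),\,\cdot\,)$ is unbounded as its argument approaches $1$.
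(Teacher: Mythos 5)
Your proof is correct and follows essentially the same route as the paper: contradiction via the distance-decreasing property of the Kobayashi distance under $f$, followed by the blow-up of the Poincar\'e distance at $\bdy\D$. The only cosmetic difference is that the paper first normalizes with an automorphism $\psi\in {\rm Aut}(\D)$ sending $f(z_\nu)$ to $0$, whereas you work directly with $\poin(f(z_\nu),\,\cdot\,)$; your explicit treatment of the connected-component issue is a small bonus of care not spelled out in the paper.
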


\begin{proof}
Let ${(z_n)_{n\geq 1}}$, $\nu$, $R$ be as given by the statement of the lemma. If possible let there exist a
holomorphic function $f\in\hol(G, \D)$ such that $f(z_n)\to 1$ as $n \to\infty$. Fix $\psi\in Aut(\D)$ 
such that $\psi(f(z_\nu))=0$. Since we know $\psi(S^1)=S^1$, so 
$\psi(1)=\lambda\in S^1$, say. Also $\psi\circ f\in\hol(G,\D)$. Therefore, for all $n\geq\nu$
\begin{align}
K_G(z_n, z_\nu) <R  \notag \\
\implies K_{\D}(\psi\circ f(z_n), \psi\circ f(z_\nu))\leq K_G(z_n, z_\nu) <R \notag \\
\implies \poin(0, {\lambda}^{-1}\psi\circ f(z_n))<R,\label{E:kine}
\end{align}
where $\poin$ is the Poincar{\'e} distance. Now, by our assumption about $f$, 
\begin{align*}
\lambda^{-1}\psi\circ f(z_n) &\lrarw 1 \\
\implies\poin(0, \lambda^{-1}\psi\circ f(z_n))&\lrarw\infty
\end{align*}
as $n\to \infty$. The last statement contradicts \eqref{E:kine}. Hence, the result.
\end{proof}

Now, we will state a definition and a proposition, which will be needed to prove our first theorem. The
following definition is due to Alehyane--Amal \cite{AlehyaneAmal:pdashdlefh03}.

\begin{definition}\label{D:weak-hyp}
Let $X$ be a complex manifold. We say that \emph{$X$ is weakly hyperbolic} if for every compact subset $K\subseteq X$, 
there exists a neighbourhood $U$ of $K$ in $X$ that satisfies the following two properties:
\begin{itemize}
\item[$(a)$] $U$ is Kobayashi hyperbolic;
\item[$(b)$] for all $f\in\smoo(\overline{\D}, X)$ such that $f$ is holomorphic on $\D$, if $f(\bdy \D)\subseteq K$ then $f(\overline{\D})\subseteq U$.
\end{itemize}
\end{definition}

\begin{proposition}\label{P:rel_com-imb}
Let $X$ be a complex manifold that is weakly hyperbolic. Let $\OM\varsubsetneq X$ be relatively compact 
domain in $X$. Then, $\OM$ is a hyperbolically imbedded domain in $X$.
\end{proposition}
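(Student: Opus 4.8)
The plan is to verify hyperbolic imbeddedness through the equivalent formulation in Remark~\ref{Rem:def-hyp_imb}: it suffices to show that any two distinct points $x,y\in\overline{\OM}$ can be separated by the Kobayashi distance of $\OM$, i.e. that there exist neighbourhoods $V_x\ni x$ and $V_y\ni y$ in $X$ with $K_\OM(V_x\cap\OM,\,V_y\cap\OM)>0$. The first move is to manufacture a single hyperbolic open set that contains all of $\overline{\OM}$. Since $\OM\Subset X$, the set $\overline{\OM}$ is compact, so weak hyperbolicity of $X$ (applied to the compact set $\overline{\OM}$) furnishes an open neighbourhood $U\supseteq\overline{\OM}$ satisfying property $(a)$ of Definition~\ref{D:weak-hyp}; that is, $U$ is Kobayashi hyperbolic. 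Passing to the connected component of $U$ that contains the connected set $\overline{\OM}$, I may assume $U$ is a hyperbolic domain with $\OM\subseteq U$.

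The heart of the argument is to transfer a lower bound for $K_U$ to $K_\OM$. Because $\OM\subseteq U$, the inclusion $\OM\hookrightarrow U$ is holomorphic, so the distance-decreasing property of the Kobayashi pseudodistance gives
\[
K_U(z,z')\;\leq\;K_\OM(z,z')\qquad\text{for all }z,z'\in\OM.
\]
Thus it is enough to bound $K_U$ from below near the pair $(x,y)$. Here I use two standard facts about the hyperbolic manifold $U$: the Kobayashi distance $K_U$ is continuous on $U\times U$, and, since $U$ is hyperbolic, $K_U(x,y)=:c>0$ for the distinct points $x,y\in\overline{\OM}\subseteq U$. Continuity makes $\{(p,q):K_U(p,q)>c/2\}$ an open subset of $U\times U$ containing $(x,y)$, hence it contains a product neighbourhood $V_x\times V_y$. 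Intersecting with $\OM$ and combining with the displayed inequality yields
\[
K_\OM(V_x\cap\OM,\,V_y\cap\OM)\;\geq\;K_U(V_x\cap\OM,\,V_y\cap\OM)\;\geq\;c/2\;>\;0,
\]
which is exactly the separation demanded by Remark~\ref{Rem:def-hyp_imb}.

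The only delicate point is this separation step, and it is delicate only insofar as it rests on the right background: the nondegeneracy of $K_U$ (equivalently, hyperbolicity of $U$, which is precisely what property $(a)$ supplies) together with the continuity of the Kobayashi pseudodistance. Once a single hyperbolic $U\supseteq\overline{\OM}$ is in hand, relative compactness of $\OM$ does the rest — every point of $\bdy\OM$ lies in the open hyperbolic set $U$, so there is no genuine boundary behaviour of $K_\OM$ to control, and the chain-of-discs analysis that usually makes hyperbolic imbeddedness subtle is bypassed entirely by the monotonicity $K_\OM\geq K_U$. I note that this route invokes only property $(a)$ of weak hyperbolicity; property $(b)$, which constrains holomorphic discs with boundary in the given compactum, does not appear to be needed for this particular implication, though it is presumably what makes weak hyperbolicity a workable hypothesis for the classes of ambient $X$ targeted by Theorem~\ref{T:weak_hyperbolic}.
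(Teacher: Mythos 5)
Your proposal is correct and follows essentially the same route as the paper: both extract a Kobayashi hyperbolic open set $U\supseteq\overline{\OM}$ from weak hyperbolicity applied to the compact set $\overline{\OM}$, use the monotonicity $K_\OM\geq K_U$ coming from the holomorphic inclusion $\OM\hookrightarrow U$, and conclude from the positivity (and continuity) of $K_U$ on the hyperbolic manifold $U$; the only cosmetic difference is that you verify the two-point separation criterion of Remark~\ref{Rem:def-hyp_imb} while the paper checks Definition~\ref{D:hyp_imb} directly via disjoint compact sets. Your closing observation that only property $(a)$ of Definition~\ref{D:weak-hyp} is used is also made explicitly by the paper in Remark~\ref{Rem:weak_hyp}.
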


\begin{proof}
Since $\OM$ is relatively compact in $X$, $\overline{\OM}$ is compact in $X$. Also, $X$ is weakly hyperbolic. 
Therefore, there exists an open set $U$ in $X$ such that $\overline{\OM}\subseteq U$ and $U$ is Kobayashi 
hyperbolic. Let $x\in\overline{\OM}$ and $V_x$ be an arbitrary neighbourhood of $x$ in $X$. Clearly $x\in V_x\cap U$ and $V_x\cap U$ is a
complex submanifold of $X$. Thus, there exists a
neighbourhood $W_x$ of $x$ in $U$ so that $\overline{W_x}\varsubsetneq V_x$,
and $\overline{W_{x}}$ is compact (and contained in $U$). Since $\overline{\overline{W_x}\cap\OM}$ is compact,
$\overline{\overline{W_{x}}\cap\OM}\cap\overline{\OM\setminus V_x}=\emptyset$, and $U$ is Kobayashi hyperbolic, we have
\begin{align*}
  K_\OM(\overline{W_x}\cap\OM, \OM\setminus V_x) &\geq K_U(\overline{W_x}\cap\OM, \OM\setminus V_x)\\
  &\geq K_U(\overline{\overline{W_x}\cap\OM}, \overline{\OM\setminus V_x}) >0,
\end{align*}
where the first inequality above is due to the fact that the inclusion $\OM\xhookrightarrow{} U$ is holomorphic.
Hence, the result. 
\end{proof}

Note that we did not use the full strength of Definition~\ref{D:weak-hyp} in the above proof. We will
comment more on this in Remark~\ref{Rem:weak_hyp}.
\medskip

\section{The proof of Theorem~\ref{T:weak_hyperbolic}}

We begin with a result that is crucial to the proof of Theorem~\ref{T:weak_hyperbolic}.

\begin{result}\label{R:cond-weak_hyp}
Let $X$ be any of the following: 
\begin{itemize}
\item[$(a)$] A Kobayashi hyperbolic complex manifold,
\item[$(b)$] A Stein manifold,
\item[$(c)$] A holomorphic fiber bundle $\pi: X\to Y$ such that
$Y$ is either of $(a)$ or $(b)$ and the fibers are Kobayashi hyperbolic,
\item[$(d)$] A holomorphic covering space $\pi: X\to Y$ such that $Y$ is either of $(a)$ or $(b)$.
\end{itemize}
Then, $X$ is weakly hyperbolic.
\end{result}

That $X$ as in $(a)$ is weakly hyperbolic is immediate from the definition (take $U$ in Definition~\ref{D:weak-hyp} to be $X$ itself).
When $X$ is as in $(b)$ or $(c)$, the above conclusion is proved in 
\cite{Duc:whscetwhs03}: see Example~2.3~(6) and Theorem~1.1 respectively discussed on \cite[p.~1017, 1015]
{Duc:whscetwhs03}.
When $X$ is as in $(d)$, the above conclusion is established in 
\cite{AlehyaneAmal:pdashdlefh03}: see (3) among the examples discussed on \cite[p.~203--204]
{AlehyaneAmal:pdashdlefh03}.

\begin{proof}[The proof of Theorem~\ref{T:weak_hyperbolic}]
Let $X$ be any of the $(a)$--$(d)$ above. Without loss of generality, we can assume that $X$ is connected. From Result~\ref{R:cond-weak_hyp}, it
follows that $X$ is weakly 
hyperbolic. Now, since $\OM\varsubsetneq X$ is a relatively compact domain in $X$, from
Proposition~\ref{P:rel_com-imb}, $\OM$ is a hyperbolically imbedded domain in $X$. Therefore, from part $(a)$ of 
Theorem~\ref{T:k-complete}, it follows that $(\OM, K_\OM)$ is Cauchy-complete.
\end{proof}

\begin{remark}\label{Rem:weak_hyp}
We must elaborate on the last observation in Section~\ref{S:prelim}: namely, that Proposition~\ref{P:rel_com-imb} does not require the
full strength of Definition~\ref{D:weak-hyp}. It turns out that the condition that is \emph{difficult} to ensure is $(a)$ of
Definition~\ref{D:weak-hyp}. The geometric or function-theoretic properties that ensure $(a)$ give us condition~$(b)$ of
Definition~\ref{D:weak-hyp} with great ease. Moreover, barring some small classes that are technical in their construction,
it does not seem easy to produce classes of complex manifolds that satisfy condition~$(a)$ of Definition~\ref{D:weak-hyp} \textbf{only}
and are not already considered by Result~\ref{R:cond-weak_hyp}.
For this reason, and because we do not wish to reproduce arguments already in \cite{AlehyaneAmal:pdashdlefh03, Duc:whscetwhs03}, we have
chosen to work with Definition~\ref{D:weak-hyp} (but treating it as an intermediary concept).
\end{remark}

\section{The proof of Theorem~\ref{T:k-complete}}\label{S:k-complete}

While Theorem~\ref{T:k-complete} resembles \cite[Theorem~1]{Gaussier:tachodic99} by Gaussier, the strategies
for their proofs are completely different. The proof of the latter result uses a localization principle
for the Kobayashi \emph{metric} which, in turn, relies on the mean-value inequality for certain plurisubharmonic
functions. With this strategy, one does not require $\OM\varsubsetneq \Cn$ to \emph{a priori} be Kobayashi
hyperbolic. This strategy applied to the setting of Theorem~\ref{T:k-complete} encounters an obstacle when it comes
to the existence of local anti-peak plurisubharmonic functions (which we shall not define here). The notion of
$\OM$ being hyperbolically imbedded provides the ``right'' condition for handling this obstacle\,---\,a condition that
is also rather nice, as Theorem~\ref{T:weak_hyperbolic} shows (cf. Result~\ref{R:cond-weak_hyp}). With the latter
hypothesis, we can provide a purely metric-geometry proof of Theorem~\ref{T:k-complete} where Lemma~\ref{L:hyp}
is the key technical tool.
\smallskip

Before we present the proof of Theorem~\ref{T:k-complete}, we should clarify that, for $X$ and $\OM$ as in this 
theorem, $\Top(X)$ will denote the manifold topology on $X$. With $\Top(\OM)$ having the analogous meaning, 
$\Top(\OM)$ is the relative topology on $\OM$ inherited from $\Top(X)$.

\begin{proof}[The proof of Theorem~\ref{T:k-complete}]
Since $\OM$ is a hyperbolically imbedded domain in $X$, $\OM$ is a Kobayashi hyperbolic domain in 
 $X$. Hence, $(\OM,K_\OM)$ is a metric space and
 \begin{align}\label{E:same_topo}
 \Top(\OM)=\Top(K_\OM),    
 \end{align}
where $\Top(K_\OM)$ denotes the metric topology on $(\OM,
 K_\OM)$. Let $(z_n)_{n\geq 1}$ be a Cauchy sequence in $(\OM,K_\OM)$. We claim that $(z_n)_{n\geq 1}$ converges 
 to some point in $\OM$ with respect to $\Top(K_\OM)$. The argument falls into two cases.
 
\medskip

\noindent{{\textbf{Case 1.}} \emph{The set $\{z_n: n\geq 1\}$ is relatively compact in $\Top(X)$.}}
\smallskip

\noindent{In this case, there exists some subsequence $(z_{n_k})_{k\geq 1}$ and $\wt{z}\in X$ such that
\begin{align}\label{E:sub_seq-lim}
 z_{n_k} & \lrarw \wt{z}\;\;\text{in} \;\; \Top(X)   
\end{align}
as $k\to\infty$. Hence, $\wt{z}\in \overline{\OM}\subseteq X$. Recall that $\overline{\OM}=\OM\cup\bdy{\OM}$. We first consider 
the subcase when $\wt{z}\in\OM$. Then, from \eqref{E:sub_seq-lim} and \eqref{E:same_topo}, it follows that
\begin{align*}
z_{n_k} &\lrarw \wt{z} \;\; \text{in} \;\; \Top(\OM)\\
\implies z_{n_k} &\lrarw \wt{z} \;\; \text{in} \;\; \Top(K_\OM)
\end{align*}
as $k\to\infty$. Since $(z_n)_{n\geq 1}$ is a Cauchy sequence in $(\OM,K_\OM)$, from above it follows that $z_n\to\wt{z}\in\OM$ in $\Top(K_\OM)$. 
Hence, our claim is true for this subcase.
\smallskip

We now consider the subcase when $\wt{z}\notin\OM$. Then, clearly $\wt{z}\in\bdy{\OM}$. Without loss of generality, we may assume
\begin{align*}
z_n\lrarw\wt{z}\in\bdy\OM\;\;\text{in}\;\;\Top(X)
\end{align*}
 as $n\to\infty$. From our assumption there exists a local holomorphic weak peak function at $\wt{z}$. Hence, there exists a 
neighbourhood $V_{\wt{z}}$ of $\wt{z}$ in $X$ and a continuous function $f_{\wt{z}}: (V_{\wt{z}}\cap\OM)\cup\{\wt{z}\}\lrarw \C$ 
such that $f_{\wt{z}}$ is holomorphic on $V_{\wt{z}}\cap\OM$ and such that 
\begin{align}\label{E:peak_function-case-1}
|f_{\wt{z}}(z)|<1 \;\;\forall z\in V_{\wt{z}}\cap \OM, \quad f_{\wt{z}}(\wt{z})=1.
\end{align}
Now, since $\OM$ is hyperbolically imbedded domain in $X$ and $V_{\wt{z}}$ is a neighbourhood of $\wt{z}$ in $X$, there exists a neighbourhood $W_{\wt{z}}$ of $\wt{z}$ in $X$ such that $\overline{W_{\wt{z}}}\subseteq 
V_{\wt{z}}$ and
\[\delta_{\wt{z}}:= K_\OM(W_{\wt{z}}\cap\OM, \OM\setminus V_{\wt{z}})>0.\]
Recall that $(z_n)_{n\geq 1}$ is a Cauchy sequence in $(\OM,K_\OM)$ such that $z_n\lrarw\wt{z}$ in $\Top(X)$ as $n\to\infty$. Hence, there exists 
$N\in \N$ such that for all $m,n\geq N$
\begin{align*}
K_\OM(z_m,z_n)<\delta_{\wt{z}}/4\quad \text{and}\quad z_n\in W_{\wt{z}}\cap\OM.  
\end{align*}
Therefore, from Lemma~\ref{L:hyp} (take $W=W_{\wt{z}}$, $V=V_{\wt{z}}$, and $\varepsilon=\delta_{\wt{z}}/4\in(0,\delta_{\wt{z}}/2)$),
there exists a constant $\wt{C}\equiv \wt{C}(\delta_{\wt{z}})>0$ such that for all $m,n\geq N$
\begin{align}
K_{V_{\wt{z}}\cap\OM}(z_m,z_n)<\wt{C}\delta_{\wt{z}}/4 \notag \\
\implies K_{V_{\wt{z}}\cap\OM}(z_n,z_N)<\wt{C}\delta_{\wt{z}}/4. \label{E:K-ineq_thm-2}
\end{align}
Furthermore, since $f_{\wt{z}}\in\hol(V_{\wt{z}}\cap\OM,\D)$ and
$f_{\wt{z}}(z_n)\to 1$
as $n\to\infty$, \eqref{E:K-ineq_thm-2} contradicts Lemma~\ref{L:hol} (take the sequence $(z_n)_{n\geq N}$, $G=V_{\wt{z}}\cap\OM$ and 
$R=\wt{C}\delta_{\wt{z}}/4<\infty$). Hence, our claim is true for Case~1.
}

\medskip

Now if $\OM$ is a relatively compact domain in $X$, then Case~1 is the only possibility for any sequence $(z_n)_{n\geq 1}$ in
$\OM$. Hence, the proof of part $(a)$ is done.

\smallskip

Now, we assume $\OM$ is not relatively compact in $X$ and consider the remaining case:

\medskip

\noindent{{\textbf{Case 2.}} \emph{The set $\{z_n:n\geq 1\}$ is not relatively compact in $\Top(X)$.}}
\smallskip

\noindent{In what follows, if $S$ is a subset of $X$, $\overline{S}$ will denote the closure relative to
$\Top(X)$. In this case, $X$ is non-compact. We consider the one-point compactification of $X$, $X^\infty$
which is compact. Hence, there exists a 
subsequence $(z_{n_k})_{k\geq 1}$ such that
$z_{n_k}\to\infty$ in $\Top(X^\infty)$ as $k\to\infty$. Relabelling this subsequence, we may, without loss
of generality, say $z_n\to\infty$ in $\Top(X^\infty)$ as $n\to\infty$. Also, by assumption
$\infty$ admits a local holomorphic weak peak function. Hence, there exists a compact set $K\subset\OM$ and a function
$f_\infty$ holomorphic on $\OM\setminus K$ such that 
\begin{align}\label{E:peak-function-case_2}
 |f_\infty (z)| <1\;\; \forall z\in\OM\setminus K,\quad \lim_{z\to\infty} f_\infty(z)=1.
\end{align}
Since $K$ is compact in $\OM$ and $\OM$ is locally compact, there exists an open set $U$ in $\OM$ such that $K\varsubsetneq 
U\subseteq\overline{U}\varsubsetneq\OM$ and $\overline{U}$ is compact. Let $V:=X\setminus K$ and $W:=X\setminus \overline{U}$, which are open sets in $X$.
Now, $K\varsubsetneq U\subseteq\overline{U}\varsubsetneq\OM$ implies
\begin{align}\label{E:compact-closed-disjoint}
\OM\setminus\overline{U}\subseteq \OM\setminus K, \quad \bdy U&\subseteq
\OM\setminus K \notag \\
\implies \overline{\OM\setminus \overline{U}}\cap\OM &\subseteq\OM\setminus K \notag\\
\implies (\overline{W\cap\OM}\cap \OM)\cap K&=\emptyset.
\end{align}
Let 
\[
\delta_\infty := K_\OM(W\cap\OM, \OM\setminus V)
\geq K_\OM((\overline{W\cap\OM}\cap\OM), K)>0,
\]
where the last inequality follows from \eqref{E:compact-closed-disjoint} 
and the fact that $K$ is compact
and $(\overline{W\cap\OM}\cap\OM)$ is closed with respect to $\Top(K_\OM)$.
Recall that (after relabelling) $(z_n)_{n\geq 1}$ is a Cauchy sequence in $(\OM, K_\OM)$ such that $z_n\to\infty$ in $\Top(X^\infty)$ as $n\to\infty$. Hence, there exists 
$N\in \N$ such that for all $m,n\geq N$
\begin{align*}
K_\OM(z_m,z_n)<\delta_{\infty}/4 \quad \text{and} \quad z_n\in W\cap\OM.  
\end{align*}
Therefore, from Lemma~\ref{L:hyp} (take $\varepsilon=\delta_\infty/4\in(0,\delta_\infty/2$)),
there exists a constant $C\equiv C(\delta_\infty)>0$ such that for all $m,n\geq N$,
\begin{align}
K_{\OM\cap V}(z_m,z_n)<C\delta_\infty/4 \notag \\
\implies K_{\OM\setminus K}(z_n,z_N)<C\delta_\infty/4. \label{E:K-infty-ineq_thm-2}
\end{align}
Furthermore, since $f_\infty\in\hol(\OM\setminus K,\D)$ and $f_\infty(z_n)\to 1$
as $n\to\infty$, \eqref{E:K-infty-ineq_thm-2} contradicts Lemma~\ref{L:hol} (take the sequence $(z_n)_{n\geq N}$, $G=\OM\setminus K$ and 
$R=C\delta_\infty/4<\infty$). Hence, Case~2 cannot occur.
}
\smallskip

These two cases establish the result.
\end{proof}
\smallskip

\section*{Acknowledgements}
I would like to thank my thesis advisor, Prof. Gautam Bharali, for many useful discussions during course of this work.
I am also grateful for his advice on the writing of this paper. This work is supported in part by a scholarship from the Prime
Minister's Research Fellowship (PMRF) programme (fellowship no.~0201077) and by a DST-FIST grant (grant no.~TPN-700661). 
\smallskip

\end{document}